	\newtheorem{thm}{Theorem}
	\newtheorem{lemma}{Lemma}
	\newtheorem{rk}{Remark}
	\newtheorem{cor}{Corollary}
	\newtheorem{ex}{Example}
	\numberwithin{equation}{section} \setcounter{tocdepth}{1}
	\newcommand{\bea}{\begin{eqnarray}}
		\newcommand{\eea}{\end{eqnarray}}
\begin{document}
		\title [QSO splitted to linear operators]
		{A many-loci system with evolution characterized by  uncountable linear operators}
		
		\author{B.A.Omirov, U.A. Rozikov}
		\address{ B.A. Omirov$^{a,b}$\begin{itemize}
\item[$^a$] Suzhou Research Institute, Harbin Institute of Technologies, Harbin  215104, Suzhou, P.R. China;
\item[$^b$] Institute for Advanced Study in Mathematics,
Harbin Institute of Technologies, Harbin 150001, P.R. China;
		\end{itemize}}
		\email{omirovb@mail.ru}
\address{ U.A. Rozikov$^{c,d,f}$\begin{itemize}
				\item[$^c$] V.I.Romanovskiy Institute of Mathematics,  Uzbekistan Academy of Sciences, 9, Universitet str., 100174, Tashkent, Uzbekistan;
				\item[$^d$]  National University of Uzbekistan,  4, Universitet str., 100174, Tashkent, Uzbekistan.
				\item[$^f$] Karshi State University, 17, Kuchabag str., 180119, Karshi, Uzbekistan.
		\end{itemize}}
		\email{rozikovu@yandex.ru}
		
		\begin{abstract} This paper investigates the evolution of a multi-locus biological system. The evolution of such a system is described by a quadratic stochastic operator (QSO) defined on a simplex. We demonstrate that this QSO can be decomposed into an infinite series of linear operators, each of which maps certain invariant subsets of the simplex to themselves. Furthermore, the entire simplex is the union of these invariant subsets, enabling analytical examination of the dynamical systems produced by the QSO. Finding all limit points of the dynamical system generated by the QSO in terms of the limit points of the linear operators, we provide a comprehensive characterization of the many-loci population dynamics.
		\end{abstract}
		\maketitle
{\bf Mathematics Subject Classifications (2010).} 37N25, 92D10.

{\bf{Key words.}} Quadratic stochastic operator, dynamical system, limit point, trajectory, fixed point.

		\section{Introduction}
	Mathematical investigations are important to a better understanding of
	living populations at all levels (see \cite{E}, \cite{GPR}, \cite{Rpd} and references therein).	
	
	\subsection{QSO and the main problem}  The quadratic stochastic operator (QSO), which maps the simplex to itself, is often used as an evolutionary operator in population dynamics. Let us recall definition, given in \cite{Rpd}, of a such QSO first:
	
Consider a population consisting of $m$ species. Let $x^{0} =(x_{1}^{0},\dots,x_{m}^{0})$ be the probability distribution  of these species in the initial generation and $P_{ij,k}$ be the probability	that individuals in the $i$-th and $j$-th species interbreed to produce an individual $k$.

Assume the population is free, i.e., there is no difference in sex and in any generation the "parents" $ij$ are independent. Then the probability distribution $x'= (x_{1}',\dots,x_{m}')$ of the species in the first generation can be determined by the total probability as
	\begin{equation}\label{0a1}
		x'_k=  \mathop {\sum}
		\limits^{m}_{i,j=1}P_{ij,k}x^{0}_{i}x^{0}_{j} ,\,\,\,\,k= 1,\dots,m.
	\end{equation}
	The association $x^{0}\rightarrow x'$
	defines a map $V$ called the \textit{evolution operator}.
	
	The states of population described by the following discrete-time dynamical
	system \index{discrete-time dynamical system}

	\begin{equation}\label{0a2}
		x^{0},\ \ x^{(k)}= V^k(x^0), \, k\geq 1
	\end{equation}
%	\begin{equation}\label{0a2}
%		x^{0},\ \ x^{(1)}= V(x^0), \ \ x^{(2)}=V^{2}(x^0),\ \  x^{(3)}= V^{3}(x^0),\dots
%	\end{equation}
	where $V^{k}(x)=V(V^{k-1}(x)).$
%\underbrace{V(V(...V}_n(x))...)$ denotes the $k$ times iteration of $V$ to $x$.
	
One of {\bf main problems} in a given dynamical system is to describe the limit points of $\{x^{(n)}\}_{n=0}^\infty$ for an arbitrary initial $x^{0}$.
	
In \cite{GMR}, known results on the problem, as well as several open problems related to the theory of QSOs, are presented (see also \cite{L}, \cite{MG}, \cite{Rpd} for a more detailed discussion of the theory of QSOs). Since QSOs are nonlinear, this problem has been studied for a few classes of QSOs. Therefore, it is important to identify a specific class of QSOs for which we can analytically study the dynamical systems generated by these operators.

In this paper, we investigate the evolution of many-loci systems, which is represented by a specific QSO. The main result of our study is the identification of a class of such specific QSOs, each of which can be reduced to infinitely many linear operators that map invariant subsets of the simplex to themselves. Since linear operators are well understood, we can fully characterize the set of all limit points of the dynamical system generated by our QSOs in terms of the limit points of these linear operators. Thus, we have thoroughly described the set of all limit points and provided biological interpretations of our results as they apply to many-loci systems.
	
\subsection{Biological background and motivations}

Let us give some necessary notations from biology, which can be found, for instance, in \cite{Gr} and \cite{St}. A locus (plural: loci) is a specific, fixed position on a chromosome where a particular gene or genetic marker is situated. This can be seen as an address on a chromosome, identifying the precise location of a gene or other genetic sequence.

Each chromosome, which is a long, continuous thread of DNA containing numerous genes, has many-loci, each corresponding to a different gene. Thus, every gene occupies a unique position or locus on a chromosome. For instance, humans have 23 pairs of chromosomes. In a haploid set, which includes one chromosome from each pair (totaling 23 chromosomes), the number of protein-coding genes is estimated to range between 19,000 and 20,000.

An evolution operator describes how the genetic composition of a population changes over time due to various evolutionary forces such as mutation, mating, selection, recombination and genetic drift.

For a many-loci system, an evolution operator needs to account for the interactions and contributions of multiple loci. One common way to represent this is through a transition operator (in particular, a matrix) or an integral operator in the context of population genetics models.

\subsubsection{Motivation: Examples of many-loci systems in real-life}

Many-loci systems refer to genetic scenarios where multiple genes (loci) influence a single trait. These systems are common in various biological contexts. The real-life examples are human height; skin and eye color; disease resistance in plants; quantitative trait loci in agriculture;
complex diseases in humans such as diabetes, heart disease, and schizophrenia, are influenced by the interaction of multiple genetic loci and environmental factors (see, for example, \cite{E}, \cite{Fl}, \cite{Hi}, \cite{Stu}, \cite{Vi}).

For two real-life examples of many-loci systems we give their evolution operator:

1. {\it Human height} is a classic example of a polygenic trait, influenced by many genetic loci. Genome-wide association studies (GWAS) have identified hundreds of loci associated with height (see \cite{Ha}).

To write an evolution operator one has to take into account the allele frequencies at loci change over generations due to natural selection, mutation (introducing new alleles), recombination (reshuffling genetic material), and genetic drift (random changes in allele frequencies). The evolution operator in this case would integrate these effects to model changes in the distribution of height in the population over time.

Let us consider a simplified example of the evolution operator for human height. Suppose we have three loci $(A, B, C)$ contributing to height.

The evolution operator for allele frequency $p_A$ at locus $A$ can be given as follows
$$ p_A' = \frac{p_A (1 + s_A \cdot w_A) (1 - \mu) + (1 - p_A) \cdot \mu + r \cdot (p_B \cdot p_C(1 - p_A) - p_A ) + \text{drift term}}{(1 + \bar{w})},$$
where

-  $ s_A  $ is the selection coefficient for allele $A$,

-  $ w_A  $ is the fitness contribution of allele $A$,

-  $ \mu  $ is the mutation rate,

-  $ r  $ is the recombination rate,

-  $ \bar{w}  $ is the mean fitness of the population.

Similarly, one can write formula for $ p_B'$ and $ p_C'$. Therefore, we obtain a non-linear operator $V: (p_A, p_B, p_C)\to
(p'_A, p'_B, p'_C)$ which is called {\it an evolution operator of human height}.

This simplified example illustrates how an evolution operator integrates multiple evolutionary forces to describe changes in allele frequencies in a population. In practice, such models involve more complex interactions and greater number of loci, especially for traits like human height, which are influenced by numerous genetic factors.\\

2. {\it Human susceptibility to complex diseases.}
Complex diseases like diabetes, heart disease, and schizophrenia are influenced by multiple genetic loci. GWAS have identified many loci associated with these diseases (see \cite{Ot}, \cite{Vi}, \cite{Wr} and the references therein).

 In the context of human populations, the evolution operator would integrate the effects of natural selection (e.g., differential survival and reproduction based on disease susceptibility), mutation rates (introducing new alleles associated with disease risk), recombination and genetic drift. This operator helps in modeling the changes in disease prevalence and genetic risk factors over time.

Mathematical modeling of human susceptibility to complex diseases requires an understanding of the interplay between genetic, environmental, and lifestyle factors. Let $\beta_i$ represent the effect size of the $i$-th genetic variant and $G_i$ denote the genotype $(0, 1$, or $2)$ for the $ i$-th variant. Then the probability of disease is defined by
$$
P(D) = \Phi^{-1}\left( \sum_{i=1}^{n} \beta_i G_i + \epsilon \right),$$
where $ \Phi^{-1}$ is the inverse of the cumulative distribution function of the standard normal distribution and $\epsilon$ represents environmental and residual effects.

The exposure effects are given by
$$E = \sum_{j=1}^{m} \gamma_j E_j,$$
where $\gamma_j$ represents the effect size of the $j$-th environmental factor, and $E_j$ is the exposure level.

Comprehensive risk model is represented as follows
$$R = \alpha + \sum_{i=1}^{n} \beta_i G_i + \sum_{j=1}^{m} \gamma_j E_j + \epsilon,$$
where $R$ is the overall risk score and $\alpha$ is a baseline risk term.

The evolution operator describes how the risk of disease changes over time, incorporating the dynamic nature of genetic, environmental, and lifestyle factors. This can be modeled using a set of differential equations or a Markov process.

Introduce notations

 $R(t)$ - the risk at time $t$,

 $G(t)$ - represents the genetic component,

 $ E(t)$ - the environmental exposures,

 $L(t)$ - lifestyle factors.

Then a continuous time model is given by differential equation
$$
\frac{dR(t)}{dt} = \beta G(t) + \gamma E(t) + \delta L(t) - \mu R(t),$$
where $\beta$, $\gamma$, $\delta$, and $\mu$ are parameters that quantify the effects of genetic, environmental, and lifestyle factors, and the decay rate of risk, respectively.

A discrete time model is given by formula
$$
P(R_{t+1} | R_t, G_t, E_t, L_t) = \sum_{i=1}^{N} P(R_{t+1} | R_t = i) P(G_t, E_t, L_t | R_t = i),
$$
where $P(R_{t+1} | R_t)$ is the transition probability from state $ R_t $ to $ R_{t+1}$, given genetic, environmental and lifestyle states.

 The evolution operator in these models captures how the risk changes, influenced by the various factors contributing to the disease.

\subsubsection{Two loci system}

 In \cite[page 68]{E} a population is considered assuming viability selection, random mating and discrete non-overlapping generations. It consists of two loci $A$ (with alleles $A_1$, $A_2$) and $B$ (with alleles $B_1$, $B_2$). In this case there are four gametes: $A_1B_1$, $A_1B_2$, $A_2B_1$, and $A_2B_2$. Denote the frequencies of these gametes by $x$, $y$, $u$ and $v,$ respectively.

 Recall that $(m-1)$-dimensional simplex is defined as
 $$ S^{m-1}=\{x=(x_1,...,x_m)\in \mathbb R^m \ : \ x_i\geq 0, \
 \sum^m_{i=1}x_i=1 \}.$$

Thus, the vector $(x, y, u, v)\in S^3$ can be considered as a state of the system and therefore, one takes it as a probability distribution on the set of gametes.

In \cite[Section 2.10]{E} the frequencies $(x', y', u', v')$ of the next generation are defined as
\begin{equation}\label{yq}	
	\begin{array}{llll}
		x'=x+a\cdot (yu-xv),\\[2mm]
		y'=y-a\cdot (yu-xv),\\[2mm]
		u'=u-b\cdot (yu-xv),\\[2mm]
		v'=v+b\cdot (yu-xv),
\end{array}\end{equation}
with $a,b\in [0,1]$.
Dynamics of operator (\ref{yq}) is completely described in \cite{DR}. Moreover, dynamical systems generated by a permuted version of this operator are well studied in \cite{RS}.

\subsubsection{Many-loci system}

Following \cite[page 245]{E} we present evolution operator of many-loci system.  This operator generalizes (\ref{yq}) and it depend on both genotypic fitnesses and the recombination pattern between loci. Recombination refers the process by which genetic material is shuffled during the formation of gametes (i.e., the reproductive cells that carry half the genetic information from an organism to its offspring).

Let $ i$ and $j $ be two parental gametes involved in recombination.
Each gamete contains a haploid set of chromosomes (i.e., one set of chromosomes, half of the genetic material necessary to form a complete organism when combined with another gamete).  During the formation of gametes recombination occurs, involving the exchange of genetic material between homologous chromosomes. This results in new combinations of alleles (variants of genes) and increases genetic diversity in the offspring.

After recombination between gametes $i$ and $j$, two new gametes are formed. These new gametes have genetic material that is a mix of the original parental gametes.

Now,  randomly choose one of the two gametes formed by recombination. Let $P_{ij,h}$ denote the probability that the randomly chosen gamete (from the two formed) is gamete $h$.

If $x_i, x_i^{\prime}$ represent the frequencies of gamete $i$ in consecutive generations, then
\begin{equation}\label{eva}
\bar{w} x_i^{\prime}=w_i x_i-\sum_{{h,j:\atop i, j \neq h}} w_{i j}P_{ij,h} x_i x_j +\sum_{{h,j:\atop h, j \neq i}} w_{h j} P_{hj, i}x_h x_j.
\end{equation}
Here, $\bar w$ denotes the mean fitness, $w_i$ the marginal fitness of gamete $i$ and $w_{ij}$ is fitness of the pair $ij$.

In general, studying dynamical systems generated by the nonlinear operator (\ref{eva}) is challenging. In this paper, we identify a specific class of operators (\ref{eva}) and provide a complete characterization of the dynamical systems generated by each operator in this class.

\subsection{Our model of many-loci system} The above considered  biological models motivate us to consider general form of operator (\ref{yq}). Namely, we define operator $$W:x=(x_1,\dots, x_{2m})\in \mathbb R^{2m}\to x'=W(x)=(x'_1,\dots, x'_{2m})\in \mathbb R^{2m}$$ as
\begin{equation}\label{uyq}	
W:	\left\{\begin{array}{llllll}
x'_{2i-1}&=&x_{2i-1}+\sum\limits_{j=1}^{m} a_{ij}\left(x_{2i}x_{2j-1}-x_{2i-1}x_{2j}\right),\\[2mm]
x'_{2i}&=&x_{2i}-\sum\limits_{j=1}^{m} a_{ij}\left(x_{2i}x_{2j-1}-x_{2i-1}x_{2j}\right),
\end{array}\right. \end{equation}
where $a_{ij}\in [0,1]$ for  $1\leq i,j \leq m$.

Since for $m=1$ the operator $W$ is identity map of $\mathbb{R}^2$, we shall consider the case $m\geq 2$.

\begin{lemma}\label{su} The quadratic operator (\ref{uyq}), $W$, maps $S^{2m-1}$ to itself.
\end{lemma}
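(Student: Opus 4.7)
The plan is to verify the two defining properties of a point in the simplex $S^{2m-1}$: (a) the image coordinates sum to $1$, and (b) each image coordinate is nonnegative. Preservation of the sum is essentially immediate from the structure of (\ref{uyq}), while nonnegativity is the main point requiring care.

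For the sum, I would observe that in (\ref{uyq}) the correction term $\sum_{j=1}^{m} a_{ij}(x_{2i}x_{2j-1}-x_{2i-1}x_{2j})$ is added to $x_{2i-1}'$ and subtracted from $x_{2i}'$. Therefore for each $i$ one has $x_{2i-1}'+x_{2i}' = x_{2i-1}+x_{2i}$, and summing over $i$ yields $\sum_{k=1}^{2m}x_k' = \sum_{k=1}^{2m}x_k = 1$.

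For nonnegativity, the key step is to rewrite each image coordinate in a manifestly nonnegative form by regrouping:
\begin{equation*}
x_{2i-1}' = x_{2i-1}\Bigl(1-\sum_{j=1}^{m}a_{ij}x_{2j}\Bigr)+x_{2i}\sum_{j=1}^{m}a_{ij}x_{2j-1},
\end{equation*}
and symmetrically
\begin{equation*}
x_{2i}' = x_{2i}\Bigl(1-\sum_{j=1}^{m}a_{ij}x_{2j-1}\Bigr)+x_{2i-1}\sum_{j=1}^{m}a_{ij}x_{2j}.
\end{equation*}
Because $a_{ij}\in[0,1]$ and $x\in S^{2m-1}$, I can then bound $\sum_{j=1}^{m}a_{ij}x_{2j}\le\sum_{j=1}^{m}x_{2j}\le\sum_{k=1}^{2m}x_k=1$, and likewise for $\sum_{j=1}^{m}a_{ij}x_{2j-1}$. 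Every bracketed factor is therefore in $[0,1]$, and all remaining summands are products of nonnegative numbers, giving $x_{2i-1}',x_{2i}'\ge 0$ for all $i$.

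The only mildly nontrivial part is the regrouping step: one must notice that the diagonal term $j=i$ in the sum $\sum_{j}a_{ij}(x_{2i}x_{2j-1}-x_{2i-1}x_{2j})$ vanishes identically for that pair (it contributes $a_{ii}(x_{2i}x_{2i-1}-x_{2i-1}x_{2i})=0$), so one can freely factor out $x_{2i-1}$ and $x_{2i}$ without worrying about sign cancellations. After that, the proof reduces to the elementary inequality $\sum_j a_{ij}x_{2j}\le 1$, which is why the hypothesis $a_{ij}\in[0,1]$ is exactly what is needed.
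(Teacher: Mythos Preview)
Your proof is correct and follows essentially the same approach as the paper's: both verify $\sum x_k'=1$ via the pairwise cancellation $x_{2i-1}'+x_{2i}'=x_{2i-1}+x_{2i}$, and both establish nonnegativity by the same regrouping of $x_{2i-1}'$ (and $x_{2i}'$) into a term with factor $1-\sum_j a_{ij}x_{2j}$ plus a manifestly nonnegative sum, using $a_{ij}\in[0,1]$ to bound that factor below by $0$.
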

\begin{proof} Let $x=(x_1,\dots, x_{2m})\in S^{2m-1}$, we show that $x'=W(x)=(x'_1,\dots, x'_{2m})\in S^{2m-1}$. It is easy to see that $\sum\limits_{i=1}^{2m} x'_i=1$. It remains to show that each coordinate $x'_{2i-1}$, $x'_{2i}$, $i=1, \dots, m$ is non-negative. Taking into account $1=\sum\limits_{i=1}^{2m} x_i$ and $a_{ij}\in [0,1]$, we deduce
$$\begin{array}{llll}
x'_{2i-1}&=&x_{2i-1}+\sum\limits_{j=1}^{m}a_{ij}(x_{2i}x_{2j-1}-
x_{2i-1}x_{2j})\\[5mm]
&=&x_{2i-1}\Big(1-\sum\limits_{j=1}^{m}a_{ij}x_{2j}\Big)+
x_{2i}\sum\limits_{j=1}^{m} a_{ij}x_{2j-1}\\[5mm]
&\geq& x_{2i-1}\Big(1-\sum\limits_{j=1}^{m} x_{2j}\Big)+x_{2i}\sum\limits_{j=1}^{m} a_{ij}x_{2j-1}\\[5mm]
&=&x_{2i-1}\sum\limits_{j=1}^{m} x_{2j-1}+x_{2i}\sum\limits_{j=1}^{m} a_{ij}x_{2j-1}\geq 0.\\[5mm]
\end{array}$$

Similarly, for $x'_{2i}$ we derive
$$x'_{2i}\geq x_{2i}\sum_{j=1}^{m} x_{2j}+x_{2i-1}\sum_{j=1}^{m} a_{ij}x_{2j}\geq 0.$$
\end{proof}

Recall that a quadratic stochastic operator $V: S^{m-1}\to S^{m-1}$ (further denoted by QSO) is defined by
\begin{equation}\label{qso}
	V: \quad x_k'=\sum\limits_{i,j=1}^mP_{ij,k}x_ix_j,
	\end{equation}
where
\begin{equation}\label{cub}
	\sum\limits_{k=1}^mP_{ij,k}=1 \quad \mbox{and} \quad P_{ij,k}\geq 0 \quad \mbox{for any} \quad 1\leq i,j,k \leq m.
\end{equation}

The operator (\ref{qso}) has not been studied in general, although some large classes of QSOs have been explored (see, for example, \cite{L}, \cite{Rpd} and references therein). However, the operator (\ref{uyq}) has not yet been investigated.

\begin{rk} From the proof of Lemma \ref{su} it follows that  operator (\ref{uyq}) is a QSO, because of it can be rewritten  as
	\begin{equation}\label{yqs}	
		W:	\left\{\begin{array}{llllll}
			x'_{2i-1}&=&x_{2i-1}\sum\limits_{j=1}^{m}\Big(x_{2j-1}+(1- a_{ij})x_{2j}\Big)+ x_{2i}\sum\limits_{j=1}^{m}a_{ij}x_{2j-1},\\[3mm]
		x'_{2i}&=&x_{2i}\sum\limits_{j=1}^{m}\Big(x_{2j}+(1- a_{ij})x_{2j-1}\Big)+ x_{2i-1}\sum\limits_{j=1}^{m}a_{ij}x_{2j}.\\[3mm]
	\end{array}\right. \end{equation}
\end{rk}

Denote by $\mathbb P=(P_{ij,k})_{i,j,k=1}^{2m-1}$ a cubic matrix satisfying (\ref{cub}). Let $\mathbb P_k=(P_{ij,k})_{i,j=1}^{2m-1}$ be $k$-th level (a square matrix) of the cubic matrix.
	
For example, if $m=2$, then for the operator (\ref{yqs}), the corresponding cubic matrix can be rewritten as
$$\mathbb P=\left(\mathbb P_1, \mathbb P_2, \mathbb P_3, \mathbb P_4\right),$$
with
$$\mathbb P_1=\left(\begin{array}{cccc}
	1&{1\over 2}&{1\over 2}&{1-a_{12}\over 2}\\[2mm]
	{1\over 2}&0&{a_{12}\over 2}&0\\[2mm]
	{1\over 2}&{a_{12}\over 2}&0&0\\[2mm]
		{1-a_{12}\over 2}&0&0&0\\[2mm]
	\end{array}
	\right), \ \  \mathbb P_2=\left(\begin{array}{cccc}
		0&{1\over 2}&0&{a_{12}\over 2}\\[2mm]
		{1\over 2}&1&{1-a_{12}\over 2}&{1\over 2}\\[2mm]
		0&{1-a_{12}\over 2}&0&0\\[2mm]
		{a_{12}\over 2}&{1\over 2}&0&0\\[2mm]
	\end{array}
	\right),$$
	$$\mathbb P_3=\left(\begin{array}{cccc}
		0&0&{1\over 2}&{a_{21}\over 2}\\[2mm]
		0&0&{1-a_{21}\over 2}&0\\[2mm]
		{1\over 2}&{1-a_{21}\over 2}&1&{1\over 2}\\[2mm]
		{a_{21}\over 2}&0&{1\over 2}&0\\[2mm]
	\end{array}
	\right), \ \ \mathbb P_4=\left(\begin{array}{cccc}
		0&0&0&{1-a_{21}\over 2}\\[2mm]
		0&0&{a_{21}\over 2}&{1\over 2}\\[2mm]
		0&{a_{21}\over 2}&0&{1\over 2}\\[2mm]
		{1-a_{21}\over 2}&{1\over 2}&{1\over 2}&1\\[2mm]
	\end{array}
	\right).$$
	
	\section{The set of limit points.}
	
Our main problem is to describe the set of limit points of the sequence  $x^{(n)}=W^{n}(x^{(0)}), n=0,1,2,..$ for each initial point $x^{(0)}\in S^{2m-1}$.

In general, when a dynamical system is generated by a nonlinear operator, finding complete solution of the main problem may be very difficult. However, in  this paper, we provide a complete solution to the main problem for nonlinear operator (\ref{uyq}).

Note that the operator (\ref{uyq}) in the case of $a_{ij}=0$ for all $i,j$ is identity map, that is why we shall not consider this case. Moreover, if $a_{i_0j}=0$ for some $i_0$ and all $j$ ($j\ne i_0$),
then we obtain $x_{2i_0-1}'=x_{2i_0-1}$ and $x_{2i_0}'=x_{2i_0}$. Thus, operator (\ref{uyq}) with the aforementioned conditions on its coefficients is simpler than the operator which satisfies the following condition:
\begin{equation}\label{c}
\sum_{{j=1\atop j\ne i}}^ma_{ij}>0, \ \ \mbox{for all} \ \ i=1,\dots,m.
\end{equation}

To avoid dealing with special cases, we will consider the operator (\ref{uyq}) with the condition (\ref{c}) on its coefficients.

For an arbitrary element $c=(c_1, \dots, c_m)$ of $S^{m-1}$ we define
$$I_c:=\{x=(x_1,...,x_{2m})\in S^{2m-1} \ : \ x_{2i-1}+x_{2i}=c_i, \ i=1, \dots, m\}.$$

From (\ref{uyq}) we derive that $x'_{2i-1}+x'_{2i}=x_{2i-1}+x_{2i}$ for any $1\leq i \leq m.$ Therefore, for any $c\in S^{m-1}$ the set
$I_c$ is invariant with respect to $W$, i.e., $W(I_c)\subseteq I_{c}.$

Note that
\begin{equation}\label{ss}
	S^{2m-1}=\bigcup_{c\in S^{m-1}} I_c.
\end{equation}
Since $I_c$ is an invariant with respect to $W$, it suffices to study limit points of the operator $W$ on sets $I_c$ for each $c\in S^{m-1}$ separately.

\begin{lemma} \label{lem2} For each $c\in S^{m-1}$ the operator $W_c:=W_{|I_c}$ is a linear map given by
\begin{equation}\label{chiz}	
	x'_{2i-1}=\Big(1-\sum_{{j=1\atop j\ne i}}^{m} a_{ij}c_j\Big) x_{2i-1}+c_i\sum_{{j=1\atop j\ne i}}^{m}a_{ij}x_{2j-1}, \ \ 1\leq i \leq m.
\end{equation}	
\end{lemma}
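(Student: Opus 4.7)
The plan is to substitute the defining constraint of $I_c$ into the formula (\ref{uyq}) for $W$ and verify that the quadratic terms in the odd coordinates collapse to a linear expression. Concretely, for a point $x \in I_c$ we have $x_{2j} = c_j - x_{2j-1}$ for every $j$, so every appearance of an even-indexed coordinate in (\ref{uyq}) can be eliminated. This reduces the problem to pure algebra.

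First I would take the formula
$$x'_{2i-1} = x_{2i-1} + \sum_{j=1}^{m} a_{ij}\bigl(x_{2i} x_{2j-1} - x_{2i-1} x_{2j}\bigr)$$
and substitute $x_{2i} = c_i - x_{2i-1}$ and $x_{2j} = c_j - x_{2j-1}$. Expanding the bracket gives
$$(c_i - x_{2i-1})x_{2j-1} - x_{2i-1}(c_j - x_{2j-1}) = c_i x_{2j-1} - x_{2i-1} c_j,$$
because the two $x_{2i-1} x_{2j-1}$ terms cancel. This is the key algebraic observation that makes $W_c$ linear: all genuinely quadratic contributions vanish on $I_c$.

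Next I would collect the remaining terms as
$$x'_{2i-1} = x_{2i-1} + c_i \sum_{j=1}^{m} a_{ij} x_{2j-1} - x_{2i-1} \sum_{j=1}^{m} a_{ij} c_j,$$
and then observe that the $j=i$ summand in each of the two sums equals $a_{ii} c_i x_{2i-1}$, so they cancel against each other. Restricting the summations to $j\neq i$ and grouping the coefficient of $x_{2i-1}$ gives exactly (\ref{chiz}).

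Finally, I would note that the formula for $x'_{2i}$ is not needed separately: the invariance identity $x'_{2i-1} + x'_{2i} = c_i$ (which was used in defining $I_c$ and already verified in the proof of Lemma \ref{su}) determines $x'_{2i}$ as an affine, hence linear-on-$I_c$, function of the odd coordinates. Thus $W_c$ is entirely described on $I_c$ by (\ref{chiz}), which is manifestly linear in $(x_1, x_3, \ldots, x_{2m-1})$. There is no real obstacle here; the only thing to watch carefully is the cancellation of the $j=i$ contributions, which is what turns the original quadratic form into a linear one once restricted to $I_c$.
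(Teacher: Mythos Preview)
Your argument is correct and follows exactly the same route as the paper: substitute $x_{2i}=c_i-x_{2i-1}$, $x_{2j}=c_j-x_{2j-1}$ into (\ref{uyq}) to obtain $x_{2i}x_{2j-1}-x_{2i-1}x_{2j}=c_ix_{2j-1}-c_jx_{2i-1}$, and then read off (\ref{chiz}). Your additional remarks on the $j=i$ cancellation and on recovering $x'_{2i}$ from the invariance $x'_{2i-1}+x'_{2i}=c_i$ are correct elaborations, but the core computation is identical to the paper's.
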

\begin{proof} Since on $I_c$ with $c=(c_1, \dots, c_m)$ we have $x_{2i-1}+x_{2i}=c_i,$ each quadratic term in \eqref{uyq} can be simplified as $$x_{2i}x_{2j-1}-x_{2i-1}x_{2j}=(c_i-x_{2i-1})x_{2j-1}-x_{2i-1}(c_j-x_{2j-1})=c_ix_{2j-1}-c_jx_{2i-1}.$$
	Substituting these in (\ref{uyq}) one gets (\ref{chiz}).
% (recall $x'_{2i}=c_i-x'_{2i-1}$).
\end{proof}
By \eqref{ss} and Lemma \ref{lem2} we conclude that the main problem of the study of limit points for the operator $W$ is reduced to the study of limit points for each linear operator $W_c:I_c\to I_c$ given by (\ref{chiz}).

Consider the matrix $A=(a_{ij})_{i,j=1}^m$, consisting of the coefficients of operator (\ref{uyq}). For a given $c\in S^{m-1}$ to simplify the notation we introduce the matrix
\begin{equation}\label{b} B_c=(b_{ik})_{i,k=1}^m, \quad \mbox{with} \quad b_{ii}=1-\sum_{{j=1\atop j\ne i}}^{m} a_{ij}c_j, \quad
	b_{ik}=c_ia_{ik}, \ \ \mbox{for} \ \ i\ne k.
\end{equation}

Setting $u_i=x_{2i-1}$, we obtain
$$J_c=\{u=(u_1, \dots, u_{m})\in \mathbb R^{m} \ : \ 0\leq u_i\leq c_i, \, \sum_{j=1}^m u_j\leq 1\}.$$
Then $W_c$ can be considered as an operator $V_c$ acting on the coordinates with odd indices, which maps $J_c$ to itself and is given by the relation
$$V_c: \quad u'=B_cu^T,$$
where $u'=(u'_1, \dots, u'_m), \ \ u=(u_1, \dots, u_m)\in J_c.$

Thus, the dynamics of $V_c$ is completely determined by the matrix $B_c$. Specifically, it is sufficient to study the following dynamical system
\begin{equation}\label{lmn}
	 u^{(n+1)}=B_c(u^{(n)})^T, \ \ n\geq 0,
\end{equation}
where $u^{(0)}\in J_c$ is an initial vector.

%The main problem is to investigate $\lim_{n\to \infty}u^{(n)}$.
To answer the main problem of the dynamical system (\ref{lmn}) one has to know the behavior of $B_c^n$ as $n\to \infty$.

Due to associativity property, we have 
\begin{equation}\label{lme}
	u^{(n)}=B_c^n(u^{(0)})^T, \ \ n\geq 0.
\end{equation}

\begin{lemma}\label{Bi} For each $c\in S^{m-1}$, the matrix  $B_c$ has eigenvalue 1.
\end{lemma}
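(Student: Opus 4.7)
The plan is to exhibit an explicit eigenvector for the eigenvalue $1$, rather than argue indirectly via a characteristic polynomial or a limiting argument. Given the way $c = (c_1, \dots, c_m)$ enters the definition of $B_c$ in \eqref{b}, it is natural to guess that $c$ itself, viewed as a column vector, is fixed by $B_c$.

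First I would simply compute $B_c c^T$ coordinatewise. The $i$-th entry is
\[
(B_c c^T)_i \;=\; b_{ii}\, c_i + \sum_{k\ne i} b_{ik}\, c_k \;=\; \Big(1-\sum_{j\ne i} a_{ij} c_j\Big) c_i + \sum_{k\ne i} c_i a_{ik} c_k,
\]
and the two sums over $j\ne i$ and $k\ne i$ cancel, leaving exactly $c_i$. Thus $B_c c^T = c^T$. Since $c \in S^{m-1}$ is nonzero (its entries sum to $1$), this is a genuine eigenvector, and hence $1$ is an eigenvalue of $B_c$.

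There is no real obstacle here: the only step that requires a moment's thought is identifying the correct eigenvector, and the structure of the off-diagonal entries $b_{ik} = c_i a_{ik}$ together with the diagonal correction $b_{ii} = 1 - \sum_{j\ne i} a_{ij} c_j$ makes the cancellation transparent. Conceptually, this reflects the fact that the coordinate sums $x_{2i-1}+x_{2i} = c_i$ are preserved by $W$, so on the invariant set $I_c$ the vector of odd-indexed coordinates cannot escape a fixed affine constraint, forcing the operator $V_c$ to admit $c$ as a fixed point (in fact $u_i = c_i$ corresponds to $x_{2i-1}=c_i$, $x_{2i}=0$, which is a fixed point of $W$).
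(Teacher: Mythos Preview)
Your proof is correct and follows exactly the same approach as the paper: exhibit $c$ as an eigenvector by computing $(B_c c^T)_i$ directly from the definition \eqref{b} and observing the cancellation. Your explicit remark that $c \ne 0$ (since $c \in S^{m-1}$) is a small clarifying detail the paper leaves implicit.
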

\begin{proof}
	It suffices to show that $B_cc^T=c^T.$ Due to (\ref{b}) for each $i\in\{1,\dots,m\}$ we get
	$$(B_cc^T)_i= \sum_{j=1}^mb_{ij}c_j=b_{ii}c_i+ \sum_{{j=1\atop j\ne i}}^mb_{ij}c_j=\left(1-\sum_{{j=1\atop j\ne i}}^{m} a_{ij}c_j\right)c_i+\sum_{{j=1\atop j\ne i}}^mc_ia_{ij}c_j=c_i.$$
\end{proof}

\begin{lemma}\label{si} Let the matrix $A=(a_{ij})_{i,j=1}^m$ be symmetric (i.e. $a_{ij}=a_{ji}$). Then the matrix $B_c$ is left stochastic, meaning that each of its entries is a non-negative real number and each column sums to 1.
\end{lemma}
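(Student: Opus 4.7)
The plan is to verify the two defining properties of a left stochastic matrix separately from the formula \eqref{b}, using symmetry of $A$ only for the column-sum condition.

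First, I would establish non-negativity of every entry of $B_c$. For the off-diagonal entries $b_{ik} = c_i a_{ik}$ with $i\neq k$, non-negativity is immediate since $c_i\geq 0$ (as $c\in S^{m-1}$) and $a_{ik}\in[0,1]$, and symmetry is not needed here. For the diagonal entries $b_{ii} = 1-\sum_{j\neq i} a_{ij}c_j$, I would bound the sum by using $a_{ij}\leq 1$ and then $\sum_{j\neq i} c_j \leq \sum_{j=1}^m c_j = 1$, giving $\sum_{j\neq i}a_{ij}c_j\leq 1$ and hence $b_{ii}\geq 0$.

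Second, I would compute the sum of the $k$-th column. Splitting off the diagonal entry,
\[
\sum_{i=1}^m b_{ik} = b_{kk} + \sum_{i\neq k} b_{ik} = \Bigl(1-\sum_{j\neq k} a_{kj}c_j\Bigr) + \sum_{i\neq k} c_i a_{ik}.
\]
Here is the only place where symmetry enters: by $a_{ik}=a_{ki}$ the last sum equals $\sum_{i\neq k} c_i a_{ki}$, which, after renaming the index $i$ to $j$, is exactly $\sum_{j\neq k} a_{kj} c_j$. This cancels the subtracted sum in $b_{kk}$, leaving $\sum_{i=1}^m b_{ik} = 1$.

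There is no real obstacle here; the entire statement is a direct bookkeeping argument. The only subtle point is noting that symmetry is used solely to identify the off-diagonal column sum at column $k$ with the subtracted quantity inside $b_{kk}$; without symmetry one would instead end up with $\sum_{j\neq k} a_{jk} c_j$ versus $\sum_{j\neq k}a_{kj}c_j$, which need not coincide. It is worth remarking (though not strictly required by the statement) that Lemma \ref{Bi} then shows $c^T$ is a right eigenvector of $B_c$ with eigenvalue $1$, consistent with $B_c$ being a genuine (left) stochastic matrix acting on the simplex scaled by $c$.
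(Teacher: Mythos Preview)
Your argument is correct and matches the paper's proof essentially line for line: the paper also checks $0\le b_{ik}$ for $i\ne k$ directly and then computes $\sum_i b_{ik}=1-\sum_{j\ne k}a_{kj}c_j+\sum_{i\ne k}a_{ik}c_i=1$ using symmetry. If anything, your version is slightly more complete, since you explicitly verify $b_{ii}\ge 0$ via $\sum_{j\ne i}a_{ij}c_j\le \sum_j c_j=1$, a point the paper leaves implicit.
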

\begin{proof} By the definition of $B_c$ we have $0\leq b_{ik}\leq 1$ for $i\ne k$. To compute the sum of the elements in the
$k$-th column (taking into account that $A$ is symmetric), we get
	$$\sum_{i=1}^mb_{ik}=1-\sum_{{j=1\atop j\ne k}}^{m} a_{kj}c_j+
	\sum_{{i=1\atop i\ne k}}^{m} a_{ik}c_i=1.$$
\end{proof}
It is well-known that all eigenvalues of a stochastic matrix have absolute values less than or equal to one.

Since each stochastic matrix defines a Markov chain (see \cite{SK}), from the lemmas proved above we can derive the following result:

\begin{cor} If the condition of Lemma \ref{si} is satisfied, then the dynamical system generated by QSO $W$ is decomposed to uncountable set of Markov chains.
\end{cor}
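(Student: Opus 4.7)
The plan is to assemble the corollary directly from the structural results already established. First, I would invoke the decomposition (\ref{ss}), which writes the whole simplex as $S^{2m-1}=\bigcup_{c\in S^{m-1}} I_c$, together with the invariance $W(I_c)\subseteq I_c$ noted immediately before Lemma \ref{lem2}. This already decomposes the dynamics of $W$ into a family of restricted dynamical systems $W_c:I_c\to I_c$, indexed by the parameter $c\in S^{m-1}$.

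Next, I would use Lemma \ref{lem2} to replace each restriction $W_c$ by the linear operator $V_c:J_c\to J_c$ acting on the odd-index coordinates via the matrix $B_c$ defined in (\ref{b}); recall that the even-index coordinates are determined by the odd ones through $x_{2i}=c_i-x_{2i-1}$, so the iteration $x^{(n+1)}=W_c(x^{(n)})$ is equivalent to (\ref{lmn}). At this stage the dynamics on $I_c$ has been reduced to a linear system governed by $B_c$.

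Under the hypothesis that $A=(a_{ij})$ is symmetric, Lemma \ref{si} tells us that $B_c$ is left stochastic for every $c\in S^{m-1}$. A left stochastic matrix is precisely the one-step transition matrix of a (time-homogeneous, finite-state) Markov chain in the sense of \cite{SK}, so (\ref{lmn}) describes the evolution of the distribution of such a chain with state space $\{1,\dots,m\}$ and transition matrix $B_c$. Collecting these chains over all parameters $c$ yields the desired decomposition.

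The only thing that then needs to be observed is that the index set $S^{m-1}$ is uncountable for $m\geq 2$, which is immediate. I do not anticipate any serious obstacle: the corollary is essentially a direct synthesis of (\ref{ss}), Lemma \ref{lem2}, and Lemma \ref{si}, and the only care required is to state the correspondence between the linear iteration (\ref{lme}) and a genuine Markov chain (via $B_c$ being left stochastic rather than right stochastic, so that $B_c$ acts on column vectors of probabilities) so that the word \emph{decomposed} is unambiguous.
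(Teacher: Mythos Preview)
Your proposal is correct and mirrors the paper's approach exactly: the paper presents the corollary as an immediate consequence of the preceding lemmas---the invariant decomposition (\ref{ss}), Lemma \ref{lem2}, and Lemma \ref{si}---together with the remark that every stochastic matrix defines a Markov chain. Your outline simply makes explicit the assembly that the paper leaves implicit.
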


	\begin{ex} For a given $a,b\in (0,1]$ we consider the case
		\begin{equation}\label{m3} m=3, \, a_{12}=a_{21}=a, \, a_{13}=a_{31}=a_{23}=a_{32}=b.
		\end{equation}
With $c=(\alpha, \beta, \gamma)$ under conditions (\ref{m3}) the matrix $B_c$ has the following form
		$$\left(\begin{array}{ccc}
			1-a\beta-b\gamma& a\alpha & b\alpha\\[2mm]
			a\beta & 1-a\alpha-b\gamma & b\beta\\[2mm]
			b\gamma & b\gamma & 1-b\alpha-b\beta
		\end{array}\right).$$
Evidently, $B_c$ is a left stochastic matrix.
		
Using $\alpha+\beta+\gamma=1$, one can get that the eigenvalues of this matrix are
		$$\lambda_1=1, \ \ \lambda_2=1-b, \ \  \lambda_3=1-a+(a-b)\gamma.$$ 	
	For any values of parameters $a,b\in (0,1]$ and $\gamma\in [0,1]$ we have $0<\lambda_2<1$ and $0<\lambda_3<1$. Moreover, since entries of matrix $B_c$ are strictly positive the dynamical system generated by linear operator of $V_c$ has unique limit point.
	\end{ex}
	
For each $t^{(0)}=(x_1^{(0)},\dots , x_{2m}^{(0)})\in S^{2m-1}$, by the equality (\ref{ss}), there exists a unique $c=c(t^{(0)})\in S^{m-1}$ such that $t^{(0)}\in I_c$.

We set 
$$Z(c)=\{i\in \{1,\dots, m\}: c_i=0\}.$$
By $|A|$ we denote number of elements in the set $A$.

\begin{rk} If $Z(c)\ne \emptyset$ for some $c\in S^{m-1}$, then since
	$$x'_{2i-1}=x'_{2i}=x_{2i-1}=x_{2i}=0 \ \mbox{for any} \ i\in Z(c)$$
	the operator $W$ is reduced on simplex $S^{2(m-|Z(c)|)-1}$.
	Therefore, without loss of generality, one can assume $Z(c)=\emptyset$,
	i.e., $c_i> 0$ for any $i\in\{1,\dots, m\}$.
\end{rk}

Let us recall the Perron-Frobenius (P-F) theorem (see e.g. \cite{Mu}, \cite{Se}):

\begin{thm}\label{PF} ({\rm Perron-Frobenius}) 	Let ${\mathbf P}$ be an $ n \times n$ irreducible non-negative matrix. The following assertions hold:
	\begin{itemize}
		\item[1.] {\rm Spectral radius:}
		The matrix ${\mathbf P}$ has a unique largest positive eigenvalue, called the P-F root, denoted by  $ \lambda_{\text{max}}  $\footnote{This eigenvalue is also called the spectral radius of ${\mathbf P}$.}.
		
		\item[2.] {\rm Simplicity:}	The P-F root  $ \lambda_{\text{max}}  $ is a simple eigenvalue (i.e., its algebraic multiplicity is 1).
		Moreover, any other eigenvalue $\lambda$ of ${\mathbf P}$ satisfies $|\lambda| < \lambda_{\text{max}}$.
		
		\item[3.] {\rm P-F eigenvector:} There exists a corresponding positive eigenvector $ v$ (i.e., all coordinates of $v$ are positive) such that ${\mathbf P} v = \lambda_{\text{max}} v $. This vector is unique up to multiplication by a positive scalar. All eigenvectors for other eigenvalues have at least one positive and one negative coordinate.
	\end{itemize}
\end{thm}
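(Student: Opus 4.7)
The plan is to establish the theorem via the classical combination of a Brouwer fixed-point argument for existence with the Collatz--Wielandt variational characterization for the spectral bounds. First I would apply Brouwer's fixed point theorem to the continuous self-map $\Phi(v) = \mathbf{P}v / \|\mathbf{P}v\|_1$ on the standard simplex $\Delta = \{v \geq 0 : \sum_i v_i = 1\}$; this is well-defined because irreducibility prevents $\mathbf{P}$ from having a zero column, so $\mathbf{P}v \neq 0$ for $v \in \Delta$. A fixed point $v$ yields a nonnegative eigenvector with eigenvalue $\lambda_{\max} = \|\mathbf{P}v\|_1 > 0$. To upgrade $v \geq 0$ to $v > 0$, I would invoke the standard equivalence that $\mathbf{P}$ is irreducible if and only if $(I+\mathbf{P})^{n-1}$ has strictly positive entries; then $(I+\mathbf{P})^{n-1} v = (1+\lambda_{\max})^{n-1} v$ is strictly positive, forcing $v > 0$.

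Second, for the upper bound $|\mu| \leq \lambda_{\max}$ on any eigenvalue $\mu$ with eigenvector $w$, the triangle inequality gives $\mathbf{P}|w| \geq |\mathbf{P}w| = |\mu||w|$ componentwise; pairing against the strictly positive left eigenvector of $\mathbf{P}$ (produced by the same existence argument applied to $\mathbf{P}^T$) yields $|\mu| \leq \lambda_{\max}$. For geometric simplicity of $\lambda_{\max}$, if some $u$ were a second linearly independent eigenvector for $\lambda_{\max}$, then a suitable real combination $u - t v$ would be a $\lambda_{\max}$-eigenvector with a zero entry; applying $(I+\mathbf{P})^{n-1}$ once more and using its strict positivity would force this combination to vanish identically, a contradiction. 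Algebraic simplicity then follows from the standard derivative-of-characteristic-polynomial formula, using that the left and right positive P--F eigenvectors have strictly positive inner product.

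The main obstacle lies in the strict inequality $|\lambda| < \lambda_{\max}$ for every other eigenvalue. As stated this is really the conclusion for \emph{primitive} (aperiodic irreducible) matrices: a general irreducible matrix of period $h$ admits exactly $h$ peripheral eigenvalues of modulus $\lambda_{\max}$ distributed on the circle of $h$-th roots of unity times $\lambda_{\max}$. Under primitivity, which in fact covers the paper's applications (the matrices $B_c$ arising from $(\ref{b})$ have strictly positive diagonal, and hence are aperiodic, whenever $\sum_{j \neq i} a_{ij} c_j < 1$ for each $i$), I would run the Wielandt argument: equality $|\mu| = \lambda_{\max}$ forces $\mathbf{P}|w| = |\mathbf{P}w|$ componentwise, which in turn forces the complex arguments of the nonzero entries of $w$ to be consistent around every directed cycle of the support graph of $\mathbf{P}$. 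Primitivity then eliminates any nontrivial rotation of arguments, yielding $\mu = \lambda_{\max}$ and contradicting the assumption that $\mu$ was a distinct eigenvalue.
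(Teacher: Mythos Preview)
The paper does not prove Theorem~\ref{PF}; it merely recalls the classical Perron--Frobenius theorem and cites \cite{Mu}, \cite{Se} for it. There is therefore no ``paper's own proof'' to compare against.

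Your sketch is a perfectly standard route to Perron--Frobenius (Brouwer fixed point on the simplex for existence, $(I+\mathbf P)^{n-1}>0$ to force positivity of the eigenvector, pairing against a positive left eigenvector for the modulus bound, and the Wielandt equality-case analysis). You are also right to flag the obstacle: the clause ``any other eigenvalue $\lambda$ of $\mathbf P$ satisfies $|\lambda|<\lambda_{\max}$'' in item~2 is, as stated, the conclusion for \emph{primitive} matrices, not for arbitrary irreducible ones (which in general have $h$ peripheral eigenvalues). Your observation that the matrices $B_c$ actually arising in the paper have strictly positive diagonal, hence are aperiodic, is exactly the remark needed to justify the paper's subsequent use of the convergence \eqref{PP}. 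So your proposal both supplies a valid argument and identifies a small imprecision in the quoted statement that is harmless for the applications made later.
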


In particular, for a non-negative irreducible matrix that is also a stochastic matrix (i.e., all row sums are 1), the Perron-Frobenius theorem implies:

- The P-F root $\lambda_{\text{max}} = 1.$ This eigenvalue corresponds to the steady-state distribution in Markov chains.

- The corresponding positive eigenvector can be normalized to form a probability distribution, which is the stationary distribution of the Markov chain.\\

\begin{lemma}\label{BO} If $a_{ij}>0$ and $c\in S^{m-1}$ is such that $c_i>0$ for all $1\leq i \leq m$, then all eigenvalues of $B_c$ have absolute values less than or equal to 1, and the eigenvalue 1 is simple.
\end{lemma}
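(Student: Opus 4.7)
The plan is to conjugate $B_c$ into an honest right-stochastic matrix and then invoke the Perron--Frobenius theorem. Since $c_i>0$ for every $i$, the diagonal matrix $D=\mathrm{diag}(c_1,\ldots,c_m)$ is invertible, and I would introduce $\widetilde B_c:=D^{-1}B_c D$. A direct computation from \eqref{b} yields $(\widetilde B_c)_{ii}=1-\sum_{j\ne i}a_{ij}c_j$ and $(\widetilde B_c)_{ij}=a_{ij}c_j$ for $i\ne j$. Since similar matrices share their spectrum, it is enough to establish both claims for $\widetilde B_c$.

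The next step is to check that $\widetilde B_c$ is indeed a right-stochastic matrix. Its entries are non-negative, because $\sum_{j\ne i}a_{ij}c_j\leq \sum_{j\ne i}c_j\leq 1$ follows from $a_{ij}\leq 1$ and $c\in S^{m-1}$. Its row sums equal $1$ by direct cancellation, or equivalently by rewriting the identity $B_c c^T=c^T$ from Lemma \ref{Bi} as $\widetilde B_c \mathbf{1}=\mathbf{1}$, where $\mathbf{1}=(1,\ldots,1)^T$. The bound $|\lambda|\leq 1$ for every eigenvalue $\lambda$ of $\widetilde B_c$ is then immediate from Gershgorin's theorem: each disc is centered at $(\widetilde B_c)_{ii}\in[0,1]$ with radius $1-(\widetilde B_c)_{ii}$, and hence lies in the closed unit disc by the triangle inequality.

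For the simplicity of the eigenvalue $1$, I would apply Theorem \ref{PF} after checking that $\widetilde B_c$ is irreducible. Under the hypothesis $a_{ij}>0$ (for $i\ne j$) and $c_j>0$, every off-diagonal entry $(\widetilde B_c)_{ij}=a_{ij}c_j$ is strictly positive, so $\widetilde B_c$ is not merely irreducible but has an entirely positive off-diagonal part; irreducibility is immediate. The Perron--Frobenius root of $\widetilde B_c$ equals $1$ because $\mathbf{1}$ is a strictly positive eigenvector with that eigenvalue, and Theorem \ref{PF} then asserts that this eigenvalue has algebraic multiplicity one.

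The argument is essentially a two-line reduction to classical results; the only step requiring care is the conjugation itself, since $B_c$ need \emph{not} be stochastic (in either the left or right sense) without the symmetry hypothesis of Lemma \ref{si}. The rescaling by $D=\mathrm{diag}(c)$ is exactly what restores the stochastic structure while preserving the spectrum, and this is the small observation that makes the whole argument go through.
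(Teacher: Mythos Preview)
Your proof is correct. The paper's own argument is shorter and skips the conjugation: it observes directly that $B_c$ is a non-negative matrix with strictly positive off-diagonal entries (hence irreducible) under the hypotheses $a_{ij}>0$, $c_i>0$, and that $c$ itself is a strictly positive right eigenvector for the eigenvalue $1$ (Lemma~\ref{Bi}); part~3 of Theorem~\ref{PF} then forces $\lambda_{\max}=1$, and parts~1--2 give simplicity and $|\lambda|\le 1$ for all eigenvalues. In other words, the paper applies Perron--Frobenius to $B_c$ as it stands, without ever needing $B_c$ to be stochastic.

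Your detour through $\widetilde B_c=D^{-1}B_cD$ is a genuine alternative: it trades the observation ``$c$ is the Perron vector of $B_c$'' for the observation ``$\mathbf{1}$ is the Perron vector of $\widetilde B_c$'', which is the same fact in different coordinates. What your route buys is that the bound $|\lambda|\le 1$ becomes an elementary Gershgorin consequence of row-stochasticity, independent of Perron--Frobenius; the paper needs Theorem~\ref{PF} even for that bound. What the paper's route buys is brevity: no similarity transformation is needed, because irreducibility and a positive eigenvector are already visible in $B_c$ itself. Your closing remark that the conjugation is ``the small observation that makes the whole argument go through'' therefore slightly overstates its necessity---it is a clean device, but not an essential one.
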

\begin{proof} By Lemma \ref{Bi} we know that $1$ is an eigenvalue of $B_c$.  The corresponding to 1 eigenvector of $B_c$ is $c$, i.e., the unique positive vector mentioned in part 3 of the Perron-Frobenius theorem. Therefore, $\lambda_{\text{max}} = 1$, indeed, if $\lambda_{\text{max}}> 1$ then corresponding positive eigenvector will be different from $c$, which contradicts to part 3 of Theorem \ref{PF}. Hence, $|\lambda|< 1$ for each eigenvalue $\lambda\ne 1$ of $B_c$.  The simplicity of 1 is a consequence of Theorem \ref{PF}.

\end{proof}

Now to study $\lim_{n\to \infty}B_c^n$ we need the following notations and facts (see \cite[Chapter 8]{Ho}):

Let $A$ be a matrix and let $v$ (resp. $w$) be the right (resp. left) eigenvector of $A$ corresponding to the eigenvalue $\lambda$, which mean $Av= \lambda v$ (resp. $w^T A = \lambda w^T$).

Perron-Frobenius theorem (see \cite[Theorem 8.4.4]{Ho}) says that
for $\lambda_{\max}$ both $v$ and $w^T$ are positive, and satisfy
the normalization condition $w^T v = 1$, which  ensures that the eigenvectors are appropriately scaled so that their inner product equals 1.

The matrix $vw^T$ is called Perron projection. It is the projection onto the eigenspace corresponding to the Perron-Frobenius eigenvalue $\lambda_{\max}$. This projection has the following key property:
\begin{equation}\label{PP}
\lim_{k \to \infty} \frac{A^k}{\lambda^k_{\max}} = vw^T.
\end{equation}
Thus, as $k$ increases, the powers of $A$ eventually align along the Perron-Frobenius eigenvalue and its associated eigenspace. All other eigenvalue contributions become negligible in comparison to $\lambda_{\max}$, and the matrix converges to the rank-1 matrix $vw^T$. This means that the long-term behavior of iterations of the matrix $A$ is governed by this projection onto the dominant eigenspace.

In the case of $B_c$ from Lemma \ref{BO}, we have $\lambda_{\max}=1$ and corresponding the right eigenvector is $v=c$.
For $w^T=(w_1, \dots, w_m)^T$ we have
$$w_1c_1+\dots+w_mc_m=1.$$

Therefore, for initial point $u^{(0)}\in J_c$, we find
\begin{equation}\label{oo}
\lim_{n \to \infty} u^{(n)}= \lim_{n \to \infty}B_c^n(u^{(0)})^T=  (cw^T)(u^{(0)})^T= \beta c,	
\end{equation}
where
$ \beta\equiv  \beta(u^{(0)})$ is the coefficient of the Perron projection of  vector $u^{(0)}$ to the one-dimensional space $\{rc: r\in \mathbb R\}.$

This equality (\ref{oo}) can now be used to give limits of all coordinates of trajectory $W^nt^{(0)}$,  $t^{(0)}\in I_c\subset S^{2m-1}$ .

Recall that a point $t\in I_c$ is called {\it a fixed point for $W_c: I_c\to I_c$} if $W_c(t)=t$.

Denote the set of all fixed points by Fix$(W_c)$.

It is easy to see that the set of all fixed points of $W_c$ is
$${\rm Fix}(W_c)=\{(u_1, c_1-u_1, \dots, u_{m}, c_m-u_m)\in I_c \ : \  \sum_{j=1}^{m} a_{ij}\left(c_iu_j-c_ju_i\right)=0\}.$$

\begin{lemma} The set ${\rm Fix}(W_c)$ is an uncountable subset of $I_c$. In particular, $	(c_1, 0, ..., c_{m}, 0)\in {\rm Fix}(W_c)$.
\end{lemma}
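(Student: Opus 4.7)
The plan is to exhibit an explicit one-parameter family of fixed points inside $I_c$, which will simultaneously give the uncountability and contain the distinguished point $(c_1,0,\ldots,c_m,0)$ as a special value.

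Concretely, I would consider, for each $t\in[0,1]$, the point
$$p(t) = (tc_1,\,(1-t)c_1,\,tc_2,\,(1-t)c_2,\,\ldots,\,tc_m,\,(1-t)c_m).$$
By construction $(p(t))_{2i-1}+(p(t))_{2i}=c_i$ and all coordinates are non-negative, so $p(t)\in I_c$. To check that $p(t)\in\mathrm{Fix}(W_c)$ using the characterization in the statement, set $u_i=tc_i$ and plug into
$$\sum_{j=1}^m a_{ij}(c_i u_j - c_j u_i) = \sum_{j=1}^m a_{ij}(c_i\cdot tc_j - c_j\cdot tc_i) = t\sum_{j=1}^m a_{ij}(c_i c_j - c_j c_i)=0,$$
which holds identically in $t$. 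Therefore the map $t\mapsto p(t)$ is an injection of $[0,1]$ into $\mathrm{Fix}(W_c)$, proving uncountability. The specific point $(c_1,0,\ldots,c_m,0)$ is precisely $p(1)$, which establishes the final assertion.

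There is essentially no obstacle here: the crucial observation is the trivial cancellation $c_i c_j - c_j c_i=0$, which says that the scaled vector $tc$ always lies in the kernel of the linear system defining the fixed points. Equivalently, one can note that the right eigenvector of $B_c$ for eigenvalue $1$ (from Lemma \ref{Bi}) is $c$ itself, so any multiple of $c$ in the odd-indexed coordinates yields a fixed point under $V_c$, which translates back to the family $p(t)$ above under the correspondence $u_i=x_{2i-1}$. I would mention this eigenvector viewpoint briefly, since it also explains why the fixed line is one-dimensional rather than higher-dimensional when the Perron hypotheses of Lemma \ref{BO} hold.
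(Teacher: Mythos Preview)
Your argument is correct and rests on the same key observation as the paper's own proof: the vector $u=c$ (and hence every scalar multiple $tc$) satisfies the fixed-point system $\sum_j a_{ij}(c_iu_j-c_ju_i)=0$. The paper packages the conclusion by introducing the matrix $H_c$, noting $H_c c^T=0$ forces $\det H_c=0$ and hence infinitely many solutions, whereas you directly parameterize the segment $\{tc:t\in[0,1]\}$; your version has the small advantage of checking explicitly that the resulting points actually lie in $I_c$.
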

\begin{proof} We have to find vectors $u=(u_1, \dots, u_m)$ with the condition $(u_1, c_1-u_1, \dots, u_{m}, c_m-u_m)\in I_c$ such that
	\begin{equation}\label{fix}
		\sum_{j=1}^{m} a_{ij}\left(c_iu_j-c_ju_i\right)=0 \ \mbox{for all} \ i\in \{1, \dots, m\}.
	\end{equation}
	Straightforward computations lead that $(u_1, \dots, u_m)=c$ 
satisfies (\ref{fix}).
	For $m\geq 2$ we set 
	$$H_c = \begin{bmatrix}
		\sum\limits_{j \ne 1} a_{1j} c_j & -c_1 a_{12} & \dots & -c_1 a_{1m} \\
		-c_2 a_{21} & \sum\limits_{j \ne 2} a_{2j} c_j & \dots & -c_2 a_{2m} \\
		\vdots & \vdots & \ddots & \vdots \\
		-c_m a_{m1} & -c_m a_{m2} & \dots & \sum\limits_{j \ne m} a_{mj} c_j
	\end{bmatrix}$$
	Then the system of equations (\ref{fix}) can be rewritten as
	$H_cu^T=0$. Since $u=c$ satisfies $H_cc^T=0$ we have that $\det(H_c)=0$, therefore $H_cu^T=0$ has infinitely many solutions.
\end{proof}

\begin{thm} For any $c\in S^{m-1}$ and $t^{(0)}=(u_1^{(0)}, c_1-u_1^{(0)}, \dots, u_m^{(0)}, c_m-u_m^{(0)})\in I_c\subset S^{2m-1}$  corresponding trajectory generated by non-linear operator (\ref{uyq}) has limit
\begin{equation}\label{D}	\lim_{n\to\infty}W^nt^{(0)}=(\beta c_1, c_1(1- \beta),  \beta c_2, c_2(1- \beta), \dots,  \beta c_m, c_m(1- \beta)),
\end{equation}
where $ \beta= \beta(t^{(0)})$ defined by (\ref{oo}).
\end{thm}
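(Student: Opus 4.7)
The plan is to leverage the reduction established in Lemma \ref{lem2}: since $I_c$ is $W$-invariant and $t^{(0)}\in I_c$, the entire trajectory lies in $I_c$, so $W^n t^{(0)} = W_c^n t^{(0)}$. Moreover $W_c$ is completely determined by its action on the odd-indexed coordinates through the linear map $V_c:u\mapsto B_c u^T$, because the even-indexed coordinates are forced by the identity $x_{2i}=c_i-x_{2i-1}$ throughout the orbit. Writing $u^{(n)}=(u_1^{(n)},\dots,u_m^{(n)})$ for the odd-indexed part of $W^n t^{(0)}$, formula (\ref{lme}) gives $u^{(n)}=B_c^n (u^{(0)})^T$, so the whole problem reduces to computing $\lim_{n\to\infty}B_c^n(u^{(0)})^T$.

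The second step is to apply the Perron projection asymptotic (\ref{PP}) to $B_c$. By Lemma \ref{BO} the spectral radius of $B_c$ equals $1$, the eigenvalue $1$ is simple, and every other eigenvalue has modulus strictly less than $1$; combined with the fact that $c$ is a right eigenvector for $\lambda_{\max}=1$ (Lemma \ref{Bi}), this yields $\lim_{n\to\infty}B_c^n=cw^T$, where $w$ is the unique left eigenvector for $1$ normalized by $w^T c=1$. As spelled out in (\ref{oo}), this gives $\lim_{n\to\infty}u^{(n)}=\beta c$ with $\beta=w^T(u^{(0)})^T$.

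The final step simply reassembles the $2m$-dimensional limit. For each $i$, the $(2i-1)$-st coordinate of $W^n t^{(0)}$ is $u_i^{(n)}\to \beta c_i$, and the $2i$-th coordinate is $c_i-u_i^{(n)}\to c_i(1-\beta)$ because of invariance of $I_c$. Interleaving these coordinates yields exactly (\ref{D}).

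The only delicate issue is the validity of the Perron projection asymptotic (\ref{PP}) for $B_c$: it requires that no eigenvalue of $B_c$ other than $1$ lies on the unit circle and that the Jordan block attached to the eigenvalue $1$ is $1\times 1$. Both conditions are supplied by Lemma \ref{BO} (strict inequality $|\lambda|<1$ for $\lambda\ne 1$, plus simplicity of $1$), so no further spectral analysis is needed and the argument amounts to assembling the lemmas already proved rather than overcoming a genuine obstacle.
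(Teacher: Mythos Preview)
Your proposal is correct and follows exactly the same route as the paper: invoke (\ref{oo}) for the odd-indexed coordinates and then recover the even-indexed ones from the invariant relation $x_{2i-1}^{(n)}+x_{2i}^{(n)}=c_i$. The paper's own proof is just the one-line version of your final step, with the preceding reductions (Lemma~\ref{lem2}, Lemma~\ref{BO}, and (\ref{PP})/(\ref{oo})) treated as already established.
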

\begin{proof} Using (\ref{oo}) by the equality
$$x^{(n)}_{2i-1}+x^{(n)}_{2i}=u^{(n)}_{i}+x^{(n)}_{2i}=c_i, \, i=1, \dots, m.$$
one completes the proof.
\end{proof}

\section{Conclusions}

The main innovation of this paper is the construction of a family of QSOs, whose dynamics can be reduced to a continuum of linear dynamical systems. Specifically, under certain parameter conditions of the QSO, these linear operators generate Markov chains. We provide a comprehensive description of the limit points of QSO (\ref{uyq}).

From the existence of the limit point of any trajectory and its explicit form of ${\rm Fix}(W)$ it follows that
$$\lim_{n\to \infty}\sum_{j=1}^{m} a_{ij}\left(x^{(n)}_{2i}x^{(n)}_{2j-1}-x^{(n)}_{2i-1}x^{(n)}_{2j}\right)=0.$$
This property, biologically means (see \cite[page 69]{E} for two loci case) that the
population asymptotically goes to a state of linkage equilibrium with respect to many loci.
The linkage equilibrium describes a situation where the alleles (gametes) at different loci are independently associated with one another, meaning that the frequency of a particular combination of alleles can be predicted by the product of the frequencies of the individual alleles.

Biological significance:

- In a state of linkage equilibrium, the alleles at different loci segregate independently of each other.

- Over time, recombination breaks down the associations between alleles at different loci, leading the population towards linkage equilibrium.

- The genotype frequencies at multiple loci can be predicted by the product of the allele frequencies at each locus.

\section*{ Acknowledgements}
The second author thanks the Institute for Advanced Study in Mathematics (IASM) at Harbin Institute of Technology (China) for supporting his visit to IASM.

\end{document}